\newtheorem{conjecture}[proposition]{Conjecture}
\title{Generalized perfect numbers}
\begin{document}
\maketitle

%% TWO AUTHORS. If there are two authors, please, use the following command and and delete the \oneauthor commnad completely.

\twoauthors{
Antal Bege
}{
Sapientia--Hungarian University of Transilvania\\
Department of Mathematics and Informatics,\\
T\^argu Mure\c{s}, Romania
}{abege@ms.sapientia.ro
}{
Kinga Fogarasi
}
{
Sapientia--Hungarian University of Transilvania\\
Department of Mathematics and Informatics,\\
T\^argu Mure\c{s}, Romania
}{kinga@ms.sapientia.ro
}

%% MORE AUTHORS. For more than two authors, please, use both commands, maybe more than once.

%% Short name of the authors and short title, to be included in heading.

\short{
A. Bege, K. Fogarasi
}{
Generalized perfect numbers
%
%%Write here the short title
}

\begin{abstract}
Let $\sigma(n)$ denote the sum of positive divisors of the natural number $n$.
A natural number is perfect if $\sigma(n) = 2n$. This concept was already generalized in form of superperfect
numbers $\sigma^2(n) = \sigma (\sigma (n)) = 2n$ and hyperperfect numbers $\sigma(n) = \frac{k+1}{k} n + \frac{k-1}{k}.$
\\
In this paper some new ways of generalizing perfect numbers are investigated, numerical results are presented and some conjectures are
established.
%% Write here the abstract of your paper.
\end{abstract}

\section{Introduction}

For the natural number $n$ we denote the sum of positive divisors by
\[
\sigma(n)=\sum\limits_{d\mid n} d.
\]

\begin{definition}
A positive integer $n$ is called perfect number if it is equal to the sum of its proper divisors. Equivalently:
\[
\sigma (n) = 2n,
\]
where
\end{definition}

\begin{example}
The first few perfect numbers are: $6, 28, 496, 8128, \dots$ (Sloane's A000396 \cite{11}), since
\begin{eqnarray}
6 &=& 1 + 2 + 3 \nonumber\\
28 &=& 1 + 2 + 4 + 7 + 14\nonumber\\
496 &=& 1 + 2 + 4 + 8 + 16 + 31 + 62 + 124 + 248\nonumber
\end{eqnarray}
Euclid discovered that the first four perfect numbers are generated by the formula $2^{n-1} (2^n -1)$. He also noticed that $2^n-1$ is a prime number for every instance, and in Proposition IX.36 of "Elements" gave the proof, that the discovered formula gives an even perfect number whenever $2^n-1$ is prime.
\\
Several wrong assumptions were made, based on the four known perfect numbers:

\begin{itemize}

\item[$\bullet$] Since the formula $2^{n-1} (2^n-1)$ gives the first four perfect numbers for $n = 2, 3, 5,$ and 7 respectively,
the fifth perfect number would be obtained when $n = 11$. However $2^{11} - 1 = 23 \cdot 89$ is not prime, therefore this doesn't yield a
perfect number.

\item[$\bullet$] The fifth perfect number would have five digits, since the first four had 1, 2, 3, and 4 digits respectively, but it has 8 digits. The perfect
numbers would alternately end in 6 or 8.

\item[$\bullet$] The fifth perfect number indeed ends with a 6, but the sixth also ends in a 6, therefore the alternation is disturbed.

\end{itemize}
\end{example}

    In order for $2^n-1$ to be a prime, $n$ must itself to be a prime.

\begin{definition}
A \textbf{Mersenne  prime} is a prime number of the form:
\[
M_n = 2^{p_n} - 1
\]
where $p_n$ must also be a prime number.
\end{definition}

Perfect numbers are intimately connected with these primes, since there is a concrete one-to-one association between \emph{even} perfect numbers and Mersenne primes. The fact that Euclid's formula gives all possible even perfect numbers was proved by Euler two millennia after the formula was discovered.
\\
Only 46 Mersenne primes are known by now (November, 2008 \cite{9}), which means there are 46 known even perfect numbers. There is a conjecture that there are infinitely many perfect numbers. The search for new ones is the goal of a distributed search program via the Internet, named GIMPS (Great Internet Mersenne Prime Search) in which hundreds of volunteers use their personal computers to perform pieces of the search.
\\
It is not known if any \emph{odd} perfect numbers exist, although numbers up to $10^{300}$ (R. Brent, G. Cohen, H. J. J. te Riele \cite{brent1}) have been checked without success. There is also a distributed searching system for this issue of which the goal is to increase the lower bound beyond the limit above.
Despite this lack of knowledge, various results have been obtained concerning the odd perfect numbers:

\begin{itemize}
\item[$\bullet$] Any odd perfect number must be of the form $12 m + 1$ or $36m + 9$.

\item[$\bullet$] If $n$ is an odd perfect number, it has the following form:
\[
n = q^\alpha p_1^{2e_1} \dots p_k^{2e_k},
\]
where $q, p_1, \dots, p_k$
are distinct primes and $q \equiv \alpha \equiv 1 \pmod{4}$. (see L. E. Dickson \cite{dickson1})

\item[$\bullet$] In the above factorization, $k$ is at least 8, and if 3 does not divide $N$, then $k$ is at least 11.

\item[$\bullet$] The largest prime factor of odd perfect number $n$ is greater than $10^8$
(see T. Goto, Y. Ohno \cite{goto1}), the second largest prime factor is greater than $10^4$ (see D. Ianucci \cite{ianucci1}),
and the third one is greater than $10^2$ (see D. Iannucci \cite{ianucci2}).

\item[$\bullet$] If any odd perfect numbers exist in form
\[
n = q^\alpha p_1^{2e_1} \dots p_k^{2e_k},
\]
they would have at least 75 prime factor in total, that means: $\alpha + 2 \sum\limits_{i=1}^k e_i \ge 75.$ (see K. G. Hare \cite{4})
\end{itemize}

D. Suryanarayana introduced the notion of superperfect number in 1969 \cite{8}, here is the definition.
\begin{definition}
A positive integer $n$ is called \textbf{superperfect number} if
\[
\sigma (\sigma (n)) = 2n.
\]
\end{definition}

Some properties concerning superperfect numbers:

\begin{itemize}
\item[$\bullet$] Even superperfect numbers are $2^{p-1}$, where $2^p -1$ is a Mersenne prime.

\item[$\bullet$] If any odd superperfect numbers exist, they are square numbers (G. G. Dandapat \cite{dandapat1}) and either $n$ or $\sigma(n)$ is divisible by at least three distinct primes. (see H. J.  Kanold \cite{5})
\end{itemize}

\section{Hyperperfect numbers}

Minoli and Bear \cite{minoli1} introduced the concept of $k$-hyperperfect number and they conjecture that there are $k$-hyperperfect numbers for every $k$.

\begin{definition}
%(\cite{3}, \cite{6}, \cite{7})
A positive integer $n$ is called \textbf{$k$-hyperperfect number} if
\[
n = 1 + k[\sigma (n) - n-1]
\]
rearranging gives:
\[
\sigma (n) = \frac{k+1}{k} n + \frac{k-1}{k}.
\]
\end{definition}

\noindent
We remark that a number is perfect iff it is 1-hyperperfect.
In the paper of J. S. Craine \cite{6} all hyperperfect numbers less than $10^{11}$ have been computed

\begin{example}
The table below shows some $k$-hyperperfect numbers for different $k$ values:
\\

%\begin{table}
%  \centering
\begin{center}
\begin{tabular}{|c|l|}
 \hline
  $\textbf{k}$ & $\mathbf{k}$-\textbf{hyperperfect} number    \\ \hline
  1 & 6 ,28, 496, 8128, ... \\
  2 & 21, 2133, 19521, 176661, ... \\
  3 & 325, ... \\
  4 & 1950625, 1220640625, ... \\
  6 & 301, 16513, 60110701, ... \\
  10 & 159841, ... \\
  12 & 697, 2041, 1570153, 62722153, ... \\
  \hline
\end{tabular}
% \caption{}\label{}
%\end{table}
\end{center}

\end{example}

\noindent
Some results concerning hyperperfect numbers:
\begin{itemize}
\item[$\bullet$] If $k > 1$ is an odd integer and $p = (3k+1)/2$ and $q = 3k + 4$ are prime numbers, then $p^2q$ is $k$-hyperperfect;
J. S. McCraine \cite{6} has conjectured in 2000 that all $k$-hyperperfect numbers for odd $k > 1$ are of this form, but the hypothesis has not been
proven so far.

\item[$\bullet$] If $p$ and $q$ are distinct odd primes such that $k (p+q) = pq - 1$ for some integer, $k$ then $n = pq$ is $k$-hyperperfect.

\item[$\bullet$] If $k > 0$ and $p = k+1$ is prime, then for all $i > 1$ such that $q = p^i - p +1$ is prime, $n = p^{i-1} q$ is $k$-hyperperfect
(see H. J. J. te Riele \cite{teriele1}, J. C. M. Nash \cite{7}).
\end{itemize}

We have proposed some other forms of generalization, different from $k$-hyperperfect numbers, and also we have examined \textbf{super-hyperperfect numbers} ("super" in the way as super perfect):

\begin{eqnarray}
&& \sigma (\sigma (n)) = \frac{k+1}{k} n + \frac{k-1}{k} \nonumber \\
&& \sigma (n) = \frac{2k-1}{k} n + \frac{1}{k} \nonumber \\
&& \sigma (\sigma (n)) = \frac{2k-1}{k} n + \frac{1}{k} \nonumber \\
%&& \sigma^* (n) = \frac{2k-1}{k} n + \frac{1}{k} \nonumber \\
%&& \sigma^* (\sigma^* (n)) = \frac{2k-1}{k} n + \frac{1}{k} \nonumber \\
&& \sigma (n) = \frac{3}{2} (n+1) \nonumber \\
&& \sigma (\sigma (n)) = \frac{3}{2} (n+1) \nonumber
\end{eqnarray}

\section{Numerical results}

For finding the numerical results for the above equalities we have used the ANSI C programming language, the Maple and the Octave programs.
Small programs written in C were very useful for going through the smaller numbers up to $10^7$, and for the rest we used the two other
programs. In this chapter the small numerical results are presented only in the cases where solutions were found.

3.1. Super-hyperperfect numbers. The table below shows the results we have reached:

\begin{table}[htbp]
  \centering
\begin{tabular}{|c|l|}
 \hline
  $\textbf{k}$ & \hspace{2cm} \textbf{n}   \\ \hline
  1 & $2, 2^2, 2^4, 2^6, 2^{12}, 2^{16}, 2^{18}$ \\
  2 & $3^2, 3^6, 3^{12}$ \\
  4 & $5^2$ \\
   \hline
\end{tabular}
% \caption{}\label{}
\end{table}

3.2. $\sigma(n) = \frac{2k-1}{k} n + \frac{1}{k}$

For $k = 2:$

\begin{table}[htbp]
  \centering
\begin{tabular}{|c|l|}
 \hline
  $\textbf{n}$ & \textbf{prime factorization}   \\ \hline
  21 & $3 \cdot 7 = 3(3^2 - 2)$ \\
  2133 & $3^3 \cdot 79 = 3^3 \cdot (3^4 - 2)$ \\
  19521 & $3^4 \cdot 241 = 3^4 \cdot (3^5 - 2)$ \\
  176661 & $3^5 \cdot 727 = 3^5 \cdot (3^6 - 2)$ \\
   \hline

\end{tabular}
% \caption{}\label{}
\end{table}

We have performed searches for $k = 3$  and $k = 5$ too, but we haven't found any solution

3.3. $\sigma (\sigma (n)) = \frac{2k-1}{k} n + \frac{1}{k}$

For $k=2:$

\begin{table}[htbp]
  \centering
\begin{tabular}{|c|l|}
 \hline
  $\textbf{k}$ & \textbf{prime factorization}  \\ \hline
  9 & $3^2$ \\
  729 & $3^6$ \\
  531441 & $3^{12}$ \\
   \hline
\end{tabular}
% \caption{}\label{}
\end{table}

We have performed searches for $k = 3$ and $k = 5$ too, but we haven't found any solution

3.4. $\sigma (n) = \frac{3}{2} (n+1)$

\begin{table}[htbp]
  \centering
\begin{tabular}{|c|l|}
 \hline
  $\textbf{k}$ & \textbf{prime factorization}  \\ \hline
  15 & $3\cdot 5$ \\
  207 & $3^2 \cdot 23$ \\
  1023 & $3\cdot 11 \cdot 31$ \\
  2975 & $5^2 \cdot 7 \cdot 17$ \\
  19359 & $3^4 \cdot 239$ \\
  147455 & $5\cdot 7 \cdot 11 \cdot 383$ \\
  1207359 & $3^3 \cdot 97 \cdot 461$ \\
  5017599 & $3^3 \cdot 83 \cdot 2239$\\
   \hline
\end{tabular}
% \caption{}\label{}
\end{table}

\section{Results and conjectures}

\begin{proposition}
 If $n = 3^{k-1} (3^k - 2)$ where $3^k - 2$ is prime, then $n$ is a 2-hyperperfect number.
\end{proposition}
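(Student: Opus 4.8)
The plan is to verify directly that $n = 3^{k-1}(3^k-2)$ satisfies the defining equation of a $2$-hyperperfect number. Instantiating the hyperperfect definition at $k=2$ gives the target relation $\sigma(n) = \frac{3}{2}n + \frac{1}{2}$, which I prefer to clear of fractions and write as $2\sigma(n) = 3n + 1$. The entire argument then reduces to producing a closed form for $\sigma(n)$ and checking this single identity.

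First I would justify that $\sigma$ factors over the two parts of $n = 3^{k-1}\cdot p$, where $p = 3^k - 2$. The essential point is that $p \equiv -2 \equiv 1 \pmod 3$, so $3 \nmid p$; together with the hypothesis that $p$ is prime this yields $\gcd(3^{k-1}, p) = 1$, and multiplicativity of $\sigma$ gives $\sigma(n) = \sigma(3^{k-1})\,\sigma(p)$. The two factors are then immediate: the geometric series gives $\sigma(3^{k-1}) = \frac{3^k - 1}{2}$, while primality of $p$ gives $\sigma(p) = p + 1 = 3^k - 1$. Multiplying, $\sigma(n) = \frac{(3^k-1)^2}{2}$.

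It remains to confirm the identity $2\sigma(n) = 3n+1$, which is pure algebra: expanding the right-hand side gives $3\cdot 3^{k-1}(3^k-2) + 1 = 3^{2k} - 2\cdot 3^k + 1 = (3^k-1)^2$, matching $2\sigma(n)$. I do not expect any genuine obstacle; the computation is routine once multiplicativity is available. The one step deserving a moment's attention is the coprimality check, since the factorization of $\sigma$ — and hence the whole result — would fail were $p$ a higher power of $3$. This is exactly what the residue computation $3^k - 2 \equiv 1 \pmod 3$ rules out.
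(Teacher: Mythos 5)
Your proposal is correct and follows essentially the same route as the paper: apply multiplicativity of $\sigma$ to the factorization $3^{k-1}(3^k-2)$, compute each factor, and verify the identity $2\sigma(n)=3n+1$ algebraically. The only difference is that you explicitly justify the coprimality of $3^{k-1}$ and $3^k-2$, a small point the paper leaves implicit.
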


\begin{proof}
Since the divisor function $\sigma$ is multiplicative and for a prime $p$ and prime power we have:
\[
\sigma (p) = p+1
\]
and
\[
\sigma (p^\alpha) = \frac{p^{\alpha +1} - 1}{p-1},
\]
it follows that:
\begin{eqnarray}
\sigma (n) &=& \sigma (3^{k-1} (3^k - 2)) = \sigma (3^{k-1}) \cdot \sigma (3^k - 2) = \frac{3^{(k-1)+1} -1}{3-1} \cdot (3^k - 2+1) = \nonumber\\
&=& \frac{(3^k - 1) \cdot (3^k -1 )}{2} = \frac{3^{2k} - 2\cdot 3^k + 1}{2} = \frac{3}{2} 3^{k-1} (3^k - 2) + \frac{1}{2}.\nonumber
\end{eqnarray}
\end{proof}

\begin{conjecture}
All 2-hyperperfect numbers are of the form $n = 3^{k-1} (3^k - 2),$ where $3^k - 2$ is prime.
\end{conjecture}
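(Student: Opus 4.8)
The plan is to mirror the Euler half of the Euclid--Euler characterization of even perfect numbers, using the Proposition just proved for the easy direction. Let $n$ be any $2$-hyperperfect number, so that $2\sigma(n) = 3n+1$. Since the left-hand side is even, $3n+1$ must be even, forcing $n$ to be odd. Writing $n = 3^{a} m$ with $\gcd(m,3)=1$ and using the multiplicativity of $\sigma$ together with $\sigma(3^{a}) = (3^{a+1}-1)/2$, the defining equation becomes
\[
(3^{a+1}-1)\,\sigma(m) = 3^{a+1} m + 1,
\]
which, setting $t = 3^{a+1}-1$, rearranges to the clean relation
\[
t\,\bigl(\sigma(m)-m\bigr) = m+1.
\]

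Next I would read the target form directly off this relation. The quantity $\sigma(m)-m$ is the sum of the proper divisors of $m$, hence $\sigma(m)-m \ge 1$ for $m>1$, with equality exactly when $m$ is prime. If $m$ is prime then $t = m+1$, i.e. $m = 3^{a+1}-2$, and putting $k = a+1$ gives $n = 3^{k-1}(3^{k}-2)$ with $3^{k}-2 = m$ prime --- precisely the conjectured shape (the case $k=1$ is excluded since it would force $m=1$). Thus the whole content of the converse is the single assertion that \emph{$m$ must be prime}; everything else is bookkeeping already contained in the Proposition.

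To attack primality I would bound the deficiency. Suppose $m$ is composite with least prime factor $q$; since $n$ is odd and $3\nmid m$ we have $q\ge 5$. The divisors $1$ and $m/q$ are distinct proper divisors, so $\sigma(m)-m \ge 1 + m/q$. Substituting into $t(\sigma(m)-m)=m+1$ and clearing denominators yields
\[
m\,(q-t) \ge q\,(t-1).
\]
Because $t = 3^{a+1}-1 \ge 8$ for $a\ge 1$, the right-hand side is positive, so the inequality is impossible unless $q > t$. Hence a composite $m$ is forced to have \emph{every} prime factor larger than $3^{a+1}-1$; equivalently, writing the abundancy as $\sigma(m)/m = 1 + 1/t + 1/(mt)$, all prime factors must exceed $t$. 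A prime power $m = p^{e}$ with $e\ge 2$ is eliminated directly from $t\,\sigma(p^{e-1}) = p^{e}+1$ by a size-and-congruence check, so the surviving case is $m$ with at least two distinct prime factors, each exceeding $t$.

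The main obstacle is exactly this residual case, and it is the reason the statement remains a conjecture. Ruling out a composite $m$ all of whose prime factors exceed $3^{a+1}-1$ while it simultaneously satisfies the rigid equation $t(\sigma(m)-m)=m+1$ is a genuine Diophantine problem of the same flavour as the open questions on odd perfect numbers, where analogous deficiency-versus-size arguments stall once the small prime factors have been excluded. A separate difficulty lurks at $a=0$ (the claim that $3\mid n$): there $t=2$, the inequality above loses all force, and one must independently exclude odd $m$ coprime to $3$ satisfying $\sigma(m)=(3m+1)/2$. I would therefore expect only partial progress along this route --- sharp bounds on the number and size of the prime factors of a hypothetical exceptional $m$, together with the computational verification up to a large height that the authors report --- rather than a complete elementary resolution.
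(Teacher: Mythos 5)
The statement you were asked to prove is stated in the paper as a \emph{conjecture}: the authors offer no proof, only the easy converse direction (their Proposition, showing that every $3^{k-1}(3^k-2)$ with $3^k-2$ prime is $2$-hyperperfect) and a table of numerical evidence. So there is no proof in the paper to compare against, and your proposal --- quite properly --- does not claim to close the argument either. What you do prove is correct as far as it goes: the parity argument gives $n$ odd; writing $n=3^a m$ with $3\nmid m$ and $t=3^{a+1}-1$ the equation $2\sigma(n)=3n+1$ does reduce to $t(\sigma(m)-m)=m+1$; the case $m$ prime recovers exactly the conjectured form; and your deficiency bound $m(q-t)\ge q(t-1)$ correctly forces every prime factor of a composite $m$ to exceed $t$ when $a\ge 1$. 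The prime-power elimination you gesture at also checks out (one finds $p-1<t<p$, impossible for integers), though you should write it out rather than call it a ``size-and-congruence check.''

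The genuine gaps are the two you name yourself, and they are not minor loose ends but the entire content of the conjecture: (i) the case $a=0$, i.e.\ showing $3\mid n$ at all, where $t=2$ and your inequality is vacuous; and (ii) a composite $m$ with at least two distinct prime factors all exceeding $3^{a+1}-1$ satisfying $t(\sigma(m)-m)=m+1$. Neither is resolved by the methods you set up, and as you correctly observe, the second has the same character as the open problems surrounding odd perfect numbers. So this should be presented not as a proof but as a partial reduction of an open conjecture: a clean restatement of what remains to be shown, which is a worthwhile contribution, but the statement itself stays unproved.
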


\noindent
We were looking for adequate results fulfilling the suspects, therefore we have searched for primes that can be written as $3^k - 2$. We have reached the following results:

%\begin{table}[htbp]
%  \centering
\begin{center}
\begin{tabular}{|c|c|}
 \hline
 \textbf{ \# }& $k$ \textbf{ for which }$3^k - 2$ \textbf{is prime} \\ \hline
 1 & 2 \\
 2 & 4 \\
 3 & 5\\
 4 & 6 \\
 5 & 9 \\
 6 & 22 \\
 7 & 37 \\
 8 & 41 \\
 9 & 90 \\
  \hline
\end{tabular}
\end{center}

\begin{center}
\begin{tabular}{|c|c|}
 \hline
 \textbf{ \# }& $k$ \textbf{ for which }$3^k - 2$ \textbf{is prime} \\ \hline
 10 & 102 \\
 11 & 105 \\
 12 & 317 \\
 13 & 520 \\
 14 & 541 \\
 15 & 561 \\
 16 & 648 \\
 17 & 780 \\
 18 & 786 \\
 19 & 957 \\
 20 & 1353 \\
 21 & 2224 \\
 22 & 2521 \\
 23 & 6184 \\
 24 & 7989 \\
 25 & 8890 \\
 26 & 19217 \\
 27 & 20746 \\
   \hline
\end{tabular}
\end{center}
% \caption{}\label{}
%\end{table}

\noindent
Therefore the last result we reached is: $3^{20745} (3^{20746} - 2)$, which has 19796 digits.

\noindent
If we consider the super-hiperperfect numbers in special form
$\sigma (\sigma (n)) = \frac{3}{2} n + \frac{1}{2}$
we prove the following result.

\begin{proposition}
If $n = 3^{p-1}$ where $p$ and $(3^p -1)/2$ are primes, then $n$ is a super-hyperperfect number.
\end{proposition}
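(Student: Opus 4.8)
The plan is to verify the defining equation $\sigma(\sigma(n)) = \frac{3}{2}n + \frac{1}{2}$ directly by computing the iterated divisor sum for $n = 3^{p-1}$ under the stated hypotheses. The key observation is that the factorization given in the hypothesis is engineered so that each application of $\sigma$ lands on a prime power or a prime, where $\sigma$ is easy to evaluate. So the computation should cascade cleanly in two steps.

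First I would compute the inner $\sigma(n)$. Since $\sigma(p^\alpha) = \frac{p^{\alpha+1}-1}{p-1}$, applying this with $p = 3$ and $\alpha = p-1$ gives
\[
\sigma(3^{p-1}) = \frac{3^{p}-1}{3-1} = \frac{3^{p}-1}{2}.
\]
Here the hypothesis that $(3^p-1)/2$ is prime becomes essential: it tells us exactly what $\sigma(n)$ factors as, namely a single prime $q := (3^p-1)/2$. Second, I would apply $\sigma$ again to this prime. Since $\sigma$ of a prime $q$ is just $q+1$, we get
\[
\sigma(\sigma(n)) = \sigma\!\left(\frac{3^{p}-1}{2}\right) = \frac{3^{p}-1}{2} + 1 = \frac{3^{p}+1}{2}.
\]

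Finally I would check that this equals the target expression. On the other side, $\frac{3}{2}n + \frac{1}{2} = \frac{3}{2}\cdot 3^{p-1} + \frac{1}{2} = \frac{3^{p}+1}{2}$, which matches exactly, completing the verification.

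I do not anticipate a genuine obstacle, since this is a direct computation rather than a structural argument; the only subtlety worth flagging is the role of the two primality hypotheses. The assumption that $p$ is prime plays no part in the algebra above and is presumably included only to mirror the Mersenne-prime setting (and perhaps because $(3^p-1)/2$ can only be prime when $p$ is prime, as $3^d-1 \mid 3^p-1$ for $d \mid p$). The assumption that $(3^p-1)/2$ is prime, by contrast, is doing the real work: it is precisely what guarantees that the outer $\sigma$ sees a prime and returns $q+1$ rather than some larger divisor sum. I would make sure to state explicitly where each hypothesis is used so the reader sees that the primality of $(3^p-1)/2$ is indispensable while the primality of $p$ is auxiliary.
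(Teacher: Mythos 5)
Your proof is correct and follows exactly the same route as the paper's: evaluate $\sigma(3^{p-1})=(3^p-1)/2$ via the prime-power formula, then use the primality of $(3^p-1)/2$ to get $\sigma(\sigma(n))=(3^p-1)/2+1=\frac{3}{2}n+\frac{1}{2}$. Your added remark pinpointing which hypothesis does the work is a useful clarification but does not change the argument.
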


\begin{proof}
\begin{eqnarray*}
\sigma (\sigma(n)) &=& \sigma (\sigma (3^{p-1})) = \sigma \left(\frac{3^p -1}{2} \right) = \frac{3^p -1}{2} + 1 =\\
&=&  \frac{3}{2} \cdot 3^{p-1} + \frac{1}{2}=\frac{3}{2}n+\frac{1}{2}.
\end{eqnarray*}
\end{proof}

\begin{conjecture}
All solutions for this generalization are $3^{p-1}$-like numbers, where $p$ and $(3^p  -1)/2$ are primes.
\end{conjecture}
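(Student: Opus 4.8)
The plan is to clear denominators, so that the defining relation becomes $2\sigma(\sigma(n))=3n+1$, and then to exploit the multiplicative structure of $\sigma(n)$. First I would record the cheap consequences: the right-hand side is even, so $3n$, and hence $n$, is odd; a direct check disposes of $n=1$, so for any solution $n>1$ and $\sigma(\sigma(n))\ge\sigma(n)+1\ge n+2$. I would also observe that the two primality hypotheses in the statement are not independent: if $n=3^{a}$ is a solution then, with $m=\sigma(3^{a})=(3^{a+1}-1)/2$, the identity becomes $\sigma(m)=(3^{a+1}+1)/2=m+1$, forcing $m$ to be prime; and any divisor $d$ of $a+1$ with $1<d<a+1$ would give the nontrivial factorization
\[
\frac{3^{\,a+1}-1}{2}=\frac{3^{d}-1}{2}\cdot\frac{3^{\,a+1}-1}{3^{d}-1},
\]
so $a+1=p$ is automatically prime. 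Hence it suffices to prove that every solution is a power of $3$.

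Next I would split according to whether $m:=\sigma(n)$ is prime. If $m$ is prime then $\sigma(m)=m+1$ and the equation collapses to the far more tractable $2\sigma(n)=3n-1$. Writing $n=3^{a}t$ with $\gcd(t,3)=1$ and using multiplicativity, this rearranges to
\[
3^{\,a+1}=\frac{\sigma(t)-1}{\sigma(t)-t},
\]
so a solution with $t>1$ would force the right-hand ratio to be an exact power of $3$. For a prime power $t=q^{k}$ one computes that this ratio equals exactly $q$, which can never be a power of $3$ when $\gcd(q,3)=1$; the remaining possibility, that $t$ has at least two distinct prime factors, is a genuine Diophantine question that I would attack through the deficiency $\sigma(t)-t$ together with congruence restrictions, the goal being to show the ratio is never a power of $3$ and hence $t=1$, i.e. $n=3^{a}$.

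The genuinely hard case is $m=\sigma(n)$ composite, where $\sigma(\sigma(n))$ strictly exceeds $\sigma(n)+1$ yet must still land exactly on $(3n+1)/2$. The natural tool is the pair of abundancy ratios $a=\sigma(n)/n$ and $b=\sigma(\sigma(n))/\sigma(n)$, which satisfy $ab=\frac{3}{2}+\frac{1}{2n}$ together with $a<\frac{3}{2}$ and $b<\frac{3}{2}$; the hope would be that compositeness of $m$ keeps $b$ bounded away from $1$ and thereby contradicts the exact identity. I expect this to be the main obstacle, and it is exactly why the statement is only a conjecture: when $m$ is a product of two primes near $\sqrt{m}$ one has $\sigma(m)=m+O(\sqrt{m})$, so $b$ can be arbitrarily close to $1$ and the crude inequalities do not exclude a conspiracy. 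Closing this case would seem to require either a substantially sharper lower bound for $\sigma(\sigma(n))$ in terms of the factorization of $\sigma(n)$, or a descent argument that propagates the divisibility by $3$ from $n$ through $\sigma(n)$ — and it is here that I would expect the real work to lie.
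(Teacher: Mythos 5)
This statement is a \emph{conjecture} in the paper: the authors give no proof of it, only the table of exponents $p-1$ for which $p$ and $(3^p-1)/2$ are both prime, so there is no proof of theirs to compare yours against. Your proposal, as you yourself concede in its closing sentences, is not a proof either; it is a reduction plus an honest inventory of what is missing. The parts you actually establish are correct and are worth recording: $n$ must be odd; if a solution happens to be a power of $3$, say $n=3^{a}$, then $m=\sigma(3^{a})=(3^{a+1}-1)/2$ satisfies $\sigma(m)=m+1$ and so is prime, and the factorization of $(3^{a+1}-1)/2$ coming from any proper divisor $d$ of $a+1$ forces $a+1$ to be prime --- so the conjecture is equivalent to ``every solution is a power of $3$,'' a genuine complement to Proposition 2, which proves only the converse inclusion. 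Likewise your computation that, when $\sigma(n)$ is prime and $n=3^{a}q^{k}$ with $q\neq 3$, the identity $3^{a+1}=(\sigma(t)-1)/(\sigma(t)-t)$ collapses to $3^{a+1}=q$ and is therefore impossible, is correct.

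The genuine gaps are the two you name, and neither is close to being filled. First, in the case $\sigma(n)$ prime you must still exclude $t$ with at least two distinct prime factors; ``congruence restrictions on the deficiency'' is a hope, not an argument, and nothing you write bounds or constrains the ratio $(\sigma(t)-1)/(\sigma(t)-t)$ for such $t$. Second, and more seriously, the entire case where $\sigma(n)$ is composite is untouched: your own observation that $b=\sigma(\sigma(n))/\sigma(n)$ can approach $1$ when $\sigma(n)$ is a product of two primes of comparable size shows that the inequalities $a<\tfrac32$, $b<\tfrac32$, $ab=\tfrac32+\tfrac1{2n}$ cannot by themselves exclude anything. This is the same obstruction that keeps the classification of odd superperfect numbers open, so one should not expect elementary abundancy estimates to close it. Present your work as a correct reduction and some verified special cases --- partial progress on an open problem --- not as a proof of the conjecture.
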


We were looking for adequate results fulfilling the suspects, therefore we have searched for primes $p$  for which $(3^p-1)/2$ is also prime.
We have reached the following results:

\begin{center}
\begin{tabular}{|c|c|}
 \hline
 \textbf{ \# }& $p-1$ \textbf{for which}$p$ and $(3^p - 1)/2$ \textbf{are primes} \\ \hline
 1 & 2 \\
 2 & 6 \\
 3 & 12\\
 4 & 540 \\
 5 & 1090 \\
 6 & 1626 \\
 7 & 4176 \\
 8 & 9010 \\
 9 & 9550 \\
    \hline
\end{tabular}
\end{center}

\noindent
Therefore the last result we reached is: $3^{9550}$, which has 4556 digits.

%% Write here your paper using the section command, theorem-like environments as:
%% definition, theorem, lemma, corollary, criterion, example, exercise, notation, problem, proposition, remark.
%% For proof you can use the proof environment.

%% For references use the following format. For citation use e. g. \cite{lothaire}.

\bigskip
\rightline{\emph{Received: November 9, 2008}}     %% to be completed by the editor

\end{document}